\theoremstyle{plain}
\numberwithin{equation}{section}
\theoremstyle{theorem}
\newtheorem{thm}[equation]{Theorem}
\newtheorem{prop}[equation]{Proposition}
\newtheorem{cor}[equation]{Corollary}
\newtheorem{lem}[equation]{Lemma}
\theoremstyle{definition}
\theoremstyle{remark}
\newtheorem{rmk}[equation]{Remark}
\newcommand{\E}{\mathcal{E}}
\renewcommand{\C}{\mathbb C}
\newcommand{\R}{\mathbb R}
\newcommand{\N}{\mathbb N}
\renewcommand{\P}{\mathbb P}
\DeclareMathOperator{\Rep}{Re}
\newcommand{\OO}{\mathcal O}
\renewcommand{\epsilon}{\varepsilon}
\newcommand{\db}{\bar{\partial}}
\newcommand{\tr}{\mathrm{tr}}
\DeclareMathOperator\supp{supp}
\DeclareMathOperator{\vol}{vol}
\DeclareMathOperator{\PSH}{PSH}
\DeclareMathOperator{\Hilb}{Hilb}
\DeclareMathOperator{\sHom}{\mathscr{H}\text{\kern -3pt {\calligra\large om}}\,}
\newcommand{\p}{\partial}
\title{Quantization for Semipositive Adjoint Line Bundles}
\author{Yu-Chi Hou}
\begin{document}
\begin{abstract}
Let \(L\) be a big and semipositive line bundle on a complex projective manifold \(X\), and let \(\theta\in c_1(L)\) be a smooth semipositive representative. In the adjoint setting \(H^0(X,L^k\otimes K_X)\), we prove that Donaldson's quantized Monge--Amp\`ere energy converges to the Monge--Amp\`ere energy for every bounded \(\theta\)-plurisubharmonic function. This extends the quantization picture from the ample case to the big and semipositive setting, where smooth positive representatives are no longer available and non-pluripolar Monge--Amp\`ere theory is required. The main new input is a comparison theorem between adjoint Bergman kernels and their small ample twists. As a consequence, we prove that the normalized adjoint Bergman measures converge weakly to the corresponding non-pluripolar Monge--Amp\`ere measures. Our result partially answers a question of Berman--Freixas i Montplet concerning the convergence of quantized Monge--Amp\`ere energies in the semipositive setting.
\end{abstract}

\keywords{K\"ahler quantization, Monge--Amp\`ere energy, Bergman kernel, non-pluripolar Monge--Amp\`ere measure, semipositive line bundle, weak geodesic}
	\maketitle
   \section{Introduction}
Let \(L\) be an ample line bundle over a complex projective manifold \(X\), and let \(\omega\) be the K\"ahler form polarized by \(L\). A central theme in K\"ahler geometry, first articulated by Yau \cite[p.~139]{Yau}, is the approximation of transcendental geometric objects on \((X,\omega)\) by asymptotic algebraic data arising from the tensor powers \(L^k:=L^{\otimes k}\). This is the philosophy of \emph{K\"ahler quantization}.

The starting point is the approximation of the infinite-dimensional space
\[
\mathcal{H}_\omega:=\{\phi\in C^\infty(X):\omega+dd^c\phi>0\}
\]
of K\"ahler potentials by the finite-dimensional spaces \(\mathcal{H}_k\) of positive Hermitian forms on \(H^0(X,L^k)\). This was established in the seminal works of Tian \cite{Tian}, Bouche \cite{Bouche}, Ruan \cite{Ruan}, Catlin \cite{Catlin}, Zelditch \cite{Zelditch}, and Lu \cite{Lu}, among many others, via asymptotic expansions of the Bergman kernel.

Donaldson later proposed that not only the metrics but also the \emph{geometry} of \(\mathcal{H}_\omega\) should be captured by the finite-dimensional geometries of \(\mathcal{H}_k\) \cite[p.~483]{DonaldsonI}. This principle has since been realized in a variety of settings and has had a significant impact on problems concerning canonical metrics and stability; see, for example, \cite{PS,SongZelditchqu,CS,BerndtssonProb,DLR,DonaldsonI,DonaldsonScalII,RTZ,Zhang} and the references therein. We refer to \cite{Mama} for an exposition of some classical developments in this direction.

A natural question is whether this quantization picture extends beyond the ample case. This question has become increasingly relevant in view of recent progress on canonical metrics in degenerate or singular settings \cite{BBJ,LTW,Li,DZ24,Dervan,Xu,PT25-1,PTT23,PT25-2}, where the positivity of \(L\) is available only in a weaker sense. The present paper fits into this broader direction by establishing a quantization result for the Monge--Amp\`ere energy in the big and semipositive setting.

\subsection*{Statement of Main Results}
We now describe our main results, referring to Section~\ref{backgrounds on pluripotential theory} and Section~\ref{Bergman Measures, Positivity of Direct Images, and Equilibrium} for details. Let \(L\) be a holomorphic line bundle over a complex projective manifold \(X\) of dimension \(n\). We assume that \(L\) is big and is endowed with a smooth Hermitian metric \(h_0\) whose Chern curvature form $
\theta:=c_1(L,h_0)$ is semipositive.

One of the main difficulties in the degenerate setting is the lack of smooth potentials. Instead, we work with the set of \(\theta\)-plurisubharmonic functions on \(X\), denoted by \(\PSH(X,\theta)\). This space parametrizes singular Hermitian metrics on \(L\) whose curvature \(\theta_u:=\theta+dd^c u\) is positive in the sense of currents. Following the pioneering work of Boucksom et al.\ \cite{BEGZ}, for \(u\in \PSH(X,\theta)\) one can define the \textit{non-pluripolar Monge--Amp\`ere measure} \(\theta_u^n\), which is a Borel measure on \(X\) charging no pluripolar sets. We denote by
\[
E_\theta:\PSH(X,\theta)\to [-\infty,\infty)
\]
the \textit{Monge--Amp\`ere energy}, or \textit{Aubin--Yau energy}. It is a primitive of the Monge--Amp\`ere operator and plays an important role in the variational approach to canonical K\"ahler metrics.

To define the quantized analogue of \(E_\theta\), we consider the adjoint bundle \(L^k\otimes K_X\), where \(K_X:=\Omega_X^{n,0}\) denotes the canonical bundle of \(X\). Fix a smooth volume form \(\nu\) on \(X\). Viewing \(\nu\) as a smooth Hermitian metric on \(K_X^*\), it induces a smooth Hermitian metric \(\nu^{-1}\) on \(K_X\) as follows: for each \(x\in X\) and \(\xi\in (K_X)_x=\Omega_{X,x}^{n,0}\), we set
\[
|\xi|^2_{\nu^{-1}(x)} := \frac{i^{n^2}\,\xi\wedge \overline{\xi}}{\nu}(x).
\]

Given a function \(u:X\to[-\infty,\infty)\), we define the \textit{Hilbert norm} on \(H^0(X,L^k\otimes K_X)\) by the \(L^2\)-norm of sections of the adjoint bundle:
\begin{equation}\label{Hilbert norm}
  \Hilb_k(u)(s,s)=\|s\|^2_{\Hilb_k(u)}:=\int_X (h_0^k\otimes \nu^{-1})(s,s)\,e^{-ku}\,d\nu,
\end{equation}
whenever the integral is finite for the given section \(s\in H^0(X,L^k\otimes K_X)\). As we will see later (cf.\ Lemma~\ref{Hilbert norm independent of the choice of volume form}), \(\Hilb_k\) is in fact independent of the choice of \(\nu\).

We say that \(u\) is \textit{admissible} if \(\Hilb_k(u)\) is finite on the whole space \(H^0(X,L^k\otimes K_X)\) for every \(k\in\N\). Clearly, every bounded function is admissible. For an admissible function \(u\), we define the \textit{(adjoint) quantized Monge--Amp\`ere energy} by
\begin{equation}\label{quantized MA-intro}
    E_k(u):=-\frac{1}{kN_k}\log\left(\frac{\det \Hilb_k(u)}{\det \Hilb_k(0)}\right),
\end{equation}
which was introduced by Donaldson \cite{DonaldsonScalII}; see also Berman--Boucksom \cite{BB10}.

Our main result is the following quantization statement for the Monge--Amp\`ere energy.

\begin{thm}\label{quantization of MA energy}
If \(L\) is a big and semipositive line bundle on \(X\), then for any \(u\in \PSH(X,\theta)\cap L^\infty\) we have
\[
\lim_{k\to\infty}E_k(u)=E_\theta(u).
\]
\end{thm}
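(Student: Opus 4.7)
The plan is to follow the variational scheme of Berman--Boucksom and Berman--Freixas i Montplet, reducing the convergence of the quantized energies to weak convergence of the adjoint Bergman measures, and to close the residual step in the big semipositive regime via the pointwise semi-classical Ohsawa--Takegoshi extension theorem mentioned in the abstract.

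Both functionals $E_k$ and $E_\theta$ are concave on $\PSH(X,\theta)\cap L^\infty$, vanish at $u=0$, and are Gateaux differentiable with
\[
\frac{d}{dt} E_k(u+t\phi)\Big|_{t=0} = \int_X \phi\, d\mu_k(u), \qquad \frac{d}{dt} E_\theta(u+t\phi)\Big|_{t=0} = \frac{1}{V}\int_X \phi\, \theta_u^n,
\]
where $\mu_k(\psi)$ denotes the normalized adjoint Bergman probability measure associated with $Hilb_k(\psi)$ and $V:=\int_X \theta^n$. Integrating along the affine path $t\mapsto tu$, $t\in[0,1]$, and using $E_k(0)=E_\theta(0)=0$ yields the identity
\[
E_k(u) - E_\theta(u) = \int_0^1\!\int_X u\Big(d\mu_k(tu) - \frac{\theta_{tu}^n}{V}\Big)\, dt.
\]
Since $u$ is bounded and each factor is a probability measure, the inner integrand is uniformly dominated in $(t,k)$ by $2\|u\|_{L^\infty}$. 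By dominated convergence the theorem reduces to the pointwise-in-$t$ statement $\int_X u\, d\mu_k(tu) \to V^{-1}\int_X u\, \theta_{tu}^n$ for every fixed $t\in[0,1]$; equivalently, to the weak convergence $\mu_k(\psi)\rightharpoonup V^{-1}\theta_\psi^n$ for $\psi := tu$, tested against the bounded $\theta$-psh function $u$.

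The heart of the argument is this Bergman convergence for bounded $\theta$-psh $\psi$. Using the extremal characterization of the Bergman kernel on the diagonal, the a.e.\ pointwise lower bound on the density of $\mu_k(\psi)$ reduces to producing, at each Lebesgue point $x$ of $\theta_\psi^n$, a global section $s_x^{(k)}\in H^0(X,L^k\otimes K_X)$ whose pointwise value at $x$ dominates its $Hilb_k(\psi)$-norm up to the correct combinatorial factor. This is precisely the content of the pointwise semi-classical Ohsawa--Takegoshi extension theorem: starting from a normalized local section in a $k^{-1/2}$-scaled chart at $x$, OT extends it to a global section of $L^k\otimes K_X$ whose $Hilb_k(\psi)$-norm is controlled by the local mass of $\theta_\psi^n$. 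Combined with the Riemann--Roch estimate $N_k = Vk^n/n! + o(k^n)$ (valid for big and semipositive $L$ in the adjoint setting by Kawamata--Viehweg vanishing), this gives the desired pointwise lower bound. The matching upper bound in integral form follows from the sub-mean-value property for $|s|^2 e^{-k\psi}$ on small coordinate balls together with the normalization $\int d\mu_k(\psi)=1$; Fatou's lemma and the total-mass equality then upgrade the a.e.\ pointwise control to weak convergence against the bounded test function $u$.

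The main obstacle is the pointwise OT step in the semipositive (non-ample) regime. When $L$ is merely semipositive, its curvature may degenerate along the augmented base locus $\mathbb{B}_+(L)$, and the classical peak-section construction underlying the Tian--Ruan--Catlin--Zelditch Bergman asymptotics does not produce asymptotically sharp constants outside the ample locus. The adjoint twist by $K_X$ is what rescues the argument: Ohsawa--Takegoshi extension for $L^k\otimes K_X$ requires only $\theta\geq 0$, and the bounded weight $e^{-k\psi}$ contributes only bounded multiplicative corrections. Refining the OT constant so that the local Monge--Amp\`ere density $\theta_\psi^n/\theta^n$ appears with the sharp asymptotic coefficient as $k\to\infty$, uniformly over the complement of a pluripolar set, is the technical crux that distinguishes this argument from the ample-case proof of Berman--Freixas i Montplet.
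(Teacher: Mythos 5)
Your reduction step is fine: for bounded $u$ and $\theta\ge 0$ the path $t\mapsto tu$ stays in $\PSH(X,\theta)\cap L^\infty$, both differentiability formulas hold, and the identity $E_k(u)-E_\theta(u)=\int_0^1\int_X u\,\bigl(d\mu_k(tu)-\tfrac{1}{V}\theta_{tu}^n\bigr)dt$ is legitimate. But this only trades the theorem for the weak convergence of the adjoint Bergman measures $\mu_k(\psi)\rightharpoonup V^{-1}\theta_\psi^n$ for an \emph{arbitrary} bounded $\theta$-psh weight $\psi=tu$, and that is where the genuine gap lies. In the paper (and in Berman--Boucksom--Witt Nystr\"om) this convergence is a \emph{consequence} of the energy quantization, obtained by differentiating $t\mapsto E_k(u+tf)$ and $t\mapsto E_\theta(P_\theta(u+tf))$ -- it is exactly Corollary \ref{convergence of Bergman measures} here -- precisely because no direct pointwise Bergman asymptotics are available for weights that are merely bounded and psh. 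Your proposed proof of it does not work as stated: (i) for bounded $\psi$ the non-pluripolar measure $\theta_\psi^n$ need not be absolutely continuous with respect to Lebesgue measure (it can charge, e.g., real hypersurfaces), so a scheme of ``a.e.\ pointwise density lower bound at Lebesgue points $+$ Fatou $+$ total mass'' cannot capture a limit measure with a singular part; (ii) an Ohsawa--Takegoshi extension from a point with asymptotically sharp constants for the weight $k\psi$ requires strict positivity of the curvature to absorb the $n\log|z-x|$ pole, and no such uniform constant exists at points where $\theta_\psi$ degenerates; moreover, with $\psi$ only bounded there is no quadratic local model against which to ``peak,'' so the claimed control of $|s(x)|^2e^{-k\psi(x)}/\|s\|^2_{Hilb_k(\psi)}$ by the ``local mass of $\theta_\psi^n$'' has no precise meaning.

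You have also misidentified the role of the pointwise semi-classical OT theorem in the paper: Theorem \ref{Pointwise Semi-Classical OT} concerns powers of a \emph{smooth, positively curved ample} bundle $A$, and it is used only to prove the comparison $B_{L^k,u}\le (1+\tfrac{C}{\epsilon k})^2 B_{(L\otimes A^\epsilon)^k,u}$ (Theorem \ref{Bergman kernel comparison}); it is never applied to the degenerate weight itself. The paper's lower bound then goes through a completely different mechanism that your proposal omits: the weak geodesic $t\mapsto u_t$ from $0$ to $u$, convexity of $E_k(u_t)$ via Berndtsson--P\u{a}un positivity of direct images, affineness of $E_\theta(u_t)$, and the sign $\dot u_0\le 0$, so that only an \emph{upper} bound on $\tfrac{1}{N_k}B_{L^k,0}$ (smooth weight $0$!) is needed -- supplied by the comparison theorem plus the $\epsilon$-twisted local Morse inequality \eqref{epsilon twisted local holomorphic morse}, followed by $k\to\infty$ and then $\epsilon\to 0$. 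To repair your argument you would either have to prove the Bergman-measure convergence for bounded psh weights by some independent route (which is essentially as hard as the theorem), or reinstate the geodesic-convexity/comparison machinery.
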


In \cite[Remark~3.6]{BF14}, Berman--Freixas i Montplet raised the question of whether Donaldson's quantized Monge--Amp\`ere energy converges to the Monge--Amp\`ere energy in the semipositive setting for finite-energy \(\theta\)-plurisubharmonic functions. Theorem~\ref{quantization of MA energy} establishes this convergence for bounded \(\theta\)-plurisubharmonic functions and thus provides a partial answer.

When \(L\) is ample, Donaldson \cite{DonaldsonScalII} first established the convergence for smooth K\"ahler potentials in the untwisted case, that is, without \(K_X\)-twisting. In the adjoint setting, Berman--Freixas i Montplet \cite[Theorem~3.5]{BF14} proved the quantization of energy for finite-energy potentials, and Darvas--Xia \cite[Theorem~2.11]{DX22} subsequently obtained the corresponding statement for arbitrary twisting. In the big untwisted case, Berman--Boucksom \cite[Theorem~A]{BB10} proved convergence for all continuous metrics. More recently, Darvas--Xia \cite{DX24} proved a \emph{partial quantization} theorem for continuous metrics with arbitrary twisting on a pseudoeffective line bundle.

Theorem~\ref{quantization of MA energy} has an immediate application to the quantization of non-pluripolar Monge--Amp\`ere measures. For an admissible weight \(u\), let \(\{S_1,\dots,S_{N_k}\}\) be an \(\Hilb_k(u)\)-orthonormal basis of \(H^0(X,L^k\otimes K_X)\), where \(N_k=\dim_\C H^0(X,L^k\otimes K_X)\). We define the \textit{Bergman kernel} and \textit{Bergman measure} by
\begin{equation}\label{Bergman measure}
K^k_{u,\nu}:=\sum_{j=1}^{N_k}(h_0^k\otimes\nu^{-1})(S_j,S_j),
\qquad
B^k_{u}:=K^k_{u,\nu}e^{-ku}\,d\nu.
\end{equation}
The measure \(B_u^k\) is independent of the choice of \(\nu\) (see Lemma~\ref{Hilbert norm independent of the choice of volume form}), although \(K^k_{u,\nu}\) depends on \(\nu\).

Combining Theorem~\ref{quantization of MA energy} with a standard trick of Berman--Boucksom--Witt Nystr\"om \cite{BBWN}, we obtain the following weak convergence of normalized Bergman measures to non-pluripolar measures.

\begin{cor}\label{convergence of Bergman measures}
Let \(u\in \PSH(X,\theta)\cap L^\infty(X)\). Then \(N_k^{-1}B_u^k\) converges weakly to
\(\vol(L)^{-1}\theta_u^n\).
\end{cor}
Our approach refines the arguments of \cite{BF14} in a way adapted to the semipositive setting. The main new input is a comparison theorem between the adjoint Bergman kernels of \(L^k\otimes K_X\) and the small ample twists \(L^k\otimes K_X\otimes A^{\lfloor \epsilon k\rfloor}\), with an almost optimal asymptotic bound (Theorem~\ref{Bergman kernel comparison}). This allows us to reduce the problem to an \(\epsilon\)-twisted ample situation, where Berman's local Morse inequality \cite[Proposition~2.5]{BF14} applies with explicit control in \(\epsilon\); see \eqref{p-twisted-local-holomorphic-morse-eps}. Combining this with the standard variational relations linking the Monge--Amp\`ere energy to non-pluripolar Monge--Amp\`ere measures and the quantized energy to Bergman measures, together with the convexity of the quantized Monge--Amp\`ere energy along weak geodesics, we prove Theorem~\ref{quantization of MA energy} for bounded potentials. As explained in Remark~\ref{pitfall for finite energy potentials}, the argument does not currently extend to arbitrary finite-energy potentials because of integrability issues for the initial tangent vectors of weak geodesics.

As this work was being finalized, Siarhei Finski kindly informed us of his related work \cite{FinskiBig}, in which he studies the quantization of the \(d_p\)-geometry on big line bundles for potentials of the form \(P_\theta(f)\) (see \S\ref{MA envelope}), where \(f\) is continuous. In particular, when \(p=1\) and \(\theta\) is semipositive, his results yield a convergence statement for the quantized energy for this class of potentials, which forms a subclass of \(\PSH(X,\theta)\cap L^\infty\). Our setting and approach differ in two main respects. First, his quantization is formulated using the untwisted spaces \(H^0(X,L^k)\), whereas we work in the adjoint setting \(H^0(X,L^k\otimes K_X)\), which allows us to apply the Berndtsson--P\u{a}un positivity theorem directly. Second, he reduces to the ample case by passing to suitable birational models over \(X\), whereas our reduction proceeds by twisting \(L^k\otimes K_X\) by an ample line bundle on \(X\), as explained above.
\subsection*{Organizations}
We briefly describe the structure of the paper. 

In Section~2 we collect the necessary background from pluripotential theory, following \cite{BEGZ,BBGZ,DDL18,DDL18L1}. We also recall the basic variational formulas for \(E_\theta\).

Section~3 is devoted to Bergman measures and quantized Monge--Amp\`ere energies in the adjoint setting. We establish basic properties of \(B_u^k\) and \(E_k(u)\), discuss their variational formulas, and adapt the arguments of \cite{Ber09,BB10} to prove the convergence of Bergman measures at equilibrium in the adjoint setting. 

Section~4 contains the main new input. We prove the comparison theorem (Theorem~\ref{Bergman kernel comparison}), based on the construction of peak sections with asymptotically optimal \(L^\infty\)-control (cf.\ Lemma~\ref{semi-classical Ohsawa--Takegoshi}), and derive the corresponding local Morse inequality in the ample twisting setting. 

Finally, in Section~5 we combine these ingredients to prove Theorem~\ref{quantization of MA energy} and Corollary~\ref{convergence of Bergman measures}. We also explain in Remark~\ref{pitfall for finite energy potentials} why the argument does not currently extend to arbitrary finite-energy potentials.
\section*{Acknowledgments}
This work was supported by NSF Grant DMS-2405274 and by the Hauptman Fellowship from the Department of Mathematics, University of Maryland. The author is grateful to his advisor, Tam\'as Darvas, for his support and guidance. The author also thanks Siarhei Finski for enlightening lectures and discussions at the summer school held at the R\'enyi Institute in Budapest, for generously sharing his work \cite{FinskiBig} and related insights, and for pointing out a mistake in an earlier version.
\section{Background on Pluripotential Theory}\label{backgrounds on pluripotential theory}
Let \(X\) be a compact K\"ahler manifold of dimension \(n\) with K\"ahler form \(\omega\). Let \(L\) be a holomorphic line bundle over \(X\), and let \(h_0\) be a smooth Hermitian metric on \(L\). Locally, \(h_0\) is given by a weight \(e^{-\Phi_0}\), that is, for a holomorphic frame \(e_L\) over an open set \(U\subset X\),
\[
h_0(e_L,e_L)=e^{-\Phi_0},\qquad \Phi_0\in C^\infty(U,\R).
\]
The Chern curvature \(\theta:=c_1(L,h_0)\) is a closed real \((1,1)\)-form on \(X\), locally given by
\[
\theta|_U=dd^c\Phi_0,
\]
where we adopt the convention
\[
d^c:=\frac{i}{4\pi}(\db-\p),\qquad \text{so that}\qquad dd^c=\frac{i}{2\pi}\p\db.
\]
By Chern--Weil theory, the cohomology class \(c_1(L):=\{\theta\}\in H^{1,1}(X,\R)\) is independent of the choice of \(h_0\) and is called the \textit{first Chern class} of \(L\).

A function \(u:X\to \R\cup\{-\infty\}\) is called \textit{\(\theta\)-plurisubharmonic} (or simply \(\theta\)-psh) if \(u+\Phi_0\) is plurisubharmonic on \(U\). We denote by \(\PSH(X,\theta)\) the set of \(\theta\)-psh functions. Thus, \(\PSH(X,\theta)\) parametrizes singular Hermitian metrics \(h_0e^{-u}\) on \(L\) with curvature current
\[
\theta_u:=\theta+dd^cu\geq 0 \quad \text{in the sense of currents}.
\]

We say that a holomorphic line bundle \(L\) over \(X\) is
\begin{itemize}
\item[(i)] \textit{positive} if there exists \(\phi\in C^\infty(X,\R)\) such that \(\theta_\phi\) is a K\"ahler form;
\item[(ii)] \textit{semipositive} if there exists \(\phi\in C^\infty(X,\R)\) such that \(\theta_\phi\) is a semipositive \((1,1)\)-form;
\item[(iii)] \textit{big} if there exists \(u\in \PSH(X,\theta)\) such that \(\theta_u\geq \epsilon\omega\) for some \(\epsilon>0\).
\end{itemize}

From now on, we work in the following set-up:
\begin{equation}\label{set-up}
\begin{aligned}
&\text{Let \(L\) be a big line bundle admitting a smooth Hermitian metric \(h_0\)}\\
&\text{with semipositive curvature \(\theta=c_1(L,h_0)\geq 0\).}
\end{aligned}
\end{equation}
A result of Ji--Shiffman \cite{JiShiffman} shows that a compact K\"ahler manifold \(X\) admitting a big line bundle is automatically projective.

For \(1\leq p\leq n\) and \(u_1,\dots,u_p\in \PSH(X,\theta)\), we recall the non-pluripolar Monge--Amp\`ere product introduced in \cite{BEGZ,GZ07}. Using Bedford--Taylor theory \cite{BT76}, consider the canonical truncations
\[
u_k^{(j)}:=\max(u_k,-j),\qquad j\in\N,
\]
and the sequence of positive currents
\[
\mathbf{1}_{\bigcap_{k=1}^p\{u_k>-j\}}\,
\theta_{u_1^{(j)}}\wedge \cdots \wedge \theta_{u_p^{(j)}}.
\]
It was shown in \cite[\S 1]{BEGZ} that this sequence is increasing and converges weakly to a positive closed \((p,p)\)-current
\[
\theta_{u_1}\wedge \cdots \wedge \theta_{u_p},
\]
called the \textit{non-pluripolar Monge--Amp\`ere product} of \(u_1,\dots,u_p\). In particular, when \(p=n\) and \(u=u_1=\cdots=u_n\), this yields a non-pluripolar Borel measure \(\theta_u^n\), called the \textit{non-pluripolar Monge--Amp\`ere measure} of \(u\). All complex Monge--Amp\`ere measures in this paper are understood in this sense.

We define the \textit{volume} of \(L\) by
\begin{equation}\label{volume of a (1,1)-class}
\vol(L):=\int_X\theta^n.
\end{equation}
By the work of Boucksom \cite{BoucksomVolume}, since \(L\) is big, we have \(\vol(L)>0\), and the above analytic definition coincides with the following algebraic definition:
\begin{equation}\label{volume of a line bundle}
\vol(L)=\limsup_{k\to\infty}\frac{h^0(X,L^k)}{k^n/n!},
\end{equation}
where \(h^0(X,L^k):=\dim_\C H^0(X,L^k)\). Moreover, this is in fact a limit by a theorem of Fujita \cite[\S 11.4]{LazarsfeldII}. For any holomorphic line bundle \(F\) over a projective manifold \(X\), the argument in \cite[Example~1.2.33]{LazarsfeldI} shows that the volume is stable under twisting:
\begin{equation}\label{stability of volume}
\lim_{k\to\infty}\frac{h^0(X,L^k\otimes F)}{k^n/n!}=\vol(L).
\end{equation}

For \(f\in C^\infty(X)\), we define the \textit{Monge--Amp\`ere envelope} of \(f\) by
\begin{equation}\label{MA envelope}
P_\theta(f):=\sup\{u\in \PSH(X,\theta):u\leq f\},
\end{equation}
and we denote the \textit{contact set} by
\[
D_\theta(f):=\{x\in X:P_\theta(f)=f\}.
\]
Berman \cite[Theorem~1.1]{Ber09} showed that the \textit{equilibrium measure} satisfies
\begin{equation}\label{Concentration of equilibrium measure}
\theta_{P_\theta(f)}^n=\mathbf{1}_{D_\theta(f)}\,\theta_f^n,
\end{equation}
where \(\theta_f^n\) is the ordinary wedge product. He also proved \cite[Proposition~3.1(iii)]{Ber09} that
\begin{equation}\label{contact set is contained in semipositive set}
D_\theta(f)\subset \{\theta_f\geq 0\}.
\end{equation}
Since \(\theta\geq 0\), the constant \(c:=\inf_X f\) is admissible in \eqref{MA envelope}, and hence \(P_\theta(f)\in \PSH(X,\theta)\cap L^\infty(X)\), with
\[
c\leq P_\theta(f)\leq C:=\sup_X f.
\]

For \(u\in \PSH(X,\theta)\cap L^\infty(X)\), we define the Monge--Amp\`ere energy by
\begin{equation}\label{MA energy}
E_\theta(u):=\frac{1}{(n+1)\vol(L)}\sum_{j=0}^n\int_X u\, \theta_u^j\wedge \theta^{n-j}.
\end{equation}
Clearly, \(E_\theta(u)\leq E_\theta(v)\) if \(u\leq v\). Following \cite[Definition~2.9]{BEGZ}, we extend \(E_\theta\) to \(\PSH(X,\theta)\) by
\[
E_\theta(u):=\inf\{E_\theta(v): v\in \PSH(X,\theta)\cap L^\infty(X),\ u\leq v\},\qquad u\in \PSH(X,\theta),
\]
and define the \textit{finite energy space} by
\begin{equation}\label{E1 space}
\E^1(X,\theta):=\{u\in \PSH(X,\theta):E_\theta(u)>-\infty\}.
\end{equation}
We summarize some properties of \(E_\theta\) and \(\E^1(X,\theta)\); see \cite{BEGZ,DDL18L1,DDL18}.

\begin{thm}\label{Properties on Energy and E1}
In the set-up \eqref{set-up},
\begin{itemize}
\item[(i)] If \(u\in \E^1(X,\theta)\), then \(\theta_u^n\) has full Monge--Amp\`ere mass and \(u\in L^1(X,\theta_u^n)\), i.e.,
\[
\int_X\theta_u^n=\vol(L),\qquad \int_X|u|\,\theta_u^n<\infty.
\]
\item[(ii)] \(E_\theta\) is continuous along decreasing sequences.
\item[(iii)] \(E_\theta\) is concave along affine curves. In particular, for \(u,v\in \E^1(X,\theta)\),
\begin{equation}\label{Energy estimate}
\frac{1}{\vol(L)}\int_X (u-v)\,\theta_u^n\leq E_\theta(u)-E_\theta(v)\leq \frac{1}{\vol(L)}\int_X (u-v)\,\theta_v^n.
\end{equation}
Consequently, if \(v\leq u\), then \(E_\theta(v)\leq E_\theta(u)\). This also implies that if \(u\in \PSH(X,\theta)\), \(v\in \E^1(X,\theta)\), and \(v\leq u\), then \(u\in \E^1(X,\theta)\).
\item[(iv)] If \(u,v\in \E^1(X,\theta)\), then \(P_\theta(u,v)\in \E^1(X,\theta)\).
\end{itemize}
\end{thm}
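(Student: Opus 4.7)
The strategy is to reduce everything to the case of minimal-singularity-type potentials, where $E_\theta$ admits the explicit formula \eqref{MA energy}, and then extend to $\E^1(X,\theta)$ via the canonical truncations $u_k := \max(u, V_\theta - k)$. Each $u_k$ has minimal singularity type and $u_k \searrow u$, so this is the same mechanism used to define non-pluripolar products in \cite{BEGZ}, and the compatibility between $E_\theta$ and $\theta^n_\bullet$ can be tracked along this approximation.

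For (iii), I would differentiate $t \mapsto E_\theta(u + t(v-u))$ along an affine segment between two minimal-singularity-type potentials. The polynomial expression \eqref{MA energy} expands cleanly after integration by parts, permitted in the non-pluripolar setting once all potentials involved have minimal singularities, and collapses to
\[
\frac{d}{dt}E_\theta\bigl(u + t(v-u)\bigr) = \frac{1}{\vol(\{\theta\})}\int_X (v-u)\,\theta_{u+t(v-u)}^n.
\]
A further integration by parts shows this derivative is monotone decreasing in $t$, yielding concavity and hence \eqref{Energy estimate}. One then passes to arbitrary $u,v \in \E^1(X,\theta)$ by approximating with truncations and invoking the monotone convergence of non-pluripolar mixed masses. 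With (iii) in hand, (i) follows from a telescoping argument: \eqref{Energy estimate} bounds $E_\theta(u_k) - E_\theta(u_{k+1})$ in terms of $\int(u_k - u_{k+1})\theta_{u_k}^n$, forcing $E_\theta(u_k) \searrow E_\theta(u) > -\infty$. Since $\int_X \theta_{u_k}^n = \vol(\{\theta\})$ and $\theta_u^n$ arises as the increasing limit of $\mathbb{1}_{\{u > V_\theta - k\}}\theta_{u_k}^n$, the finite-energy condition is precisely what rules out mass loss, yielding both full mass and, by applying \eqref{Energy estimate} between $u$ and $V_\theta$, the integrability $u \in L^1(\theta_u^n)$. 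Part (ii) is then immediate: for a decreasing sequence $u_j \searrow u$, the measures $\theta_{u_j}^n$ converge weakly and \eqref{Energy estimate} squeezes $E_\theta(u_j) \to E_\theta(u)$.

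The delicate part is (iv). I would establish the envelope identity
\[
E_\theta\bigl(P_\theta(u,v)\bigr) + E_\theta\bigl(\max(u,v)\bigr) \geq E_\theta(u) + E_\theta(v),
\]
following Darvas--Di Nezza--Lu. The right-hand side is finite by hypothesis, and $\max(u,v) \in \E^1(X,\theta)$ because it dominates $u \in \E^1$ (using the last clause of (iii)); rearranging then gives $E_\theta(P_\theta(u,v)) > -\infty$. The main obstacle is the envelope inequality itself in the big setting: it requires either approximation by bounded potentials with uniform control on mixed non-pluripolar masses, or a suitably extended form of the contact-set concentration \eqref{Concentration of equilibrium measure} applied to the envelope characterization of $P_\theta(u,v)$, both of which need careful handling of the singular locus where minimal-singularity substitutes are unavailable.
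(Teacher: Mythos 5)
This is a statement the paper does not prove at all: it is quoted verbatim from the literature (BEGZ, Prop.~2.10--2.11; Darvas--Di Nezza--Lu), so there is no in-paper argument to compare against. Your sketch follows essentially the same route as those references — the explicit formula \eqref{MA energy} for minimal-singularity potentials, the first-order variation of $E_\theta$ along affine segments giving concavity and \eqref{Energy estimate}, extension to $\E^1(X,\theta)$ by the canonical truncations $\max(u,V_\theta-k)$, and the rooftop statement via an energy comparison with $\max(u,v)$ — so in outline it is the correct reconstruction.

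Three places are thinner than they read, and they are where the cited proofs do actual work. First, in (i) the "telescoping" does not by itself rule out mass loss: you need the quantitative estimate that $\int_X(u_k-V_\theta)\,\theta_{u_k}^n\leq -k\bigl(\vol(\{\theta\})-\int_{\{u>V_\theta-k\}}\theta_{u_k}^n\bigr)$ (up to bounded terms), so that a positive escaping mass forces $E_\theta(u_k)\to-\infty$; integrability of $u$ then comes from monotone convergence of $\int(u_k-V_\theta)\theta_{u_k}^n$, not from applying \eqref{Energy estimate} to the pair $(u,V_\theta)$, which presupposes exactly the integrability being proved. Second, (ii) is stated for decreasing sequences in $\PSH(X,\theta)$ including the case $E_\theta(u)=-\infty$; your squeeze via \eqref{Energy estimate} only covers limits in $\E^1$, and it also silently uses $u_1\in L^1(X,\theta_u^n)$ (a cross-integrability fact that itself needs proof); the general case is handled by the truncation mechanism you already set up, via Bedford--Taylor continuity at the minimal-singularity level and $E_\theta(\max(u,V_\theta-k))\downarrow E_\theta(u)$. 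Third, for (iv) the inequality $E_\theta(P_\theta(u,v))+E_\theta(\max(u,v))\geq E_\theta(u)+E_\theta(v)$ is indeed the right tool, but its proof in the big setting rests on the concentration of $\theta_{P_\theta(u,v)}^n$ on the contact set, more precisely $\theta_{P_\theta(u,v)}^n\leq \mathbb{1}_{\{P_\theta(u,v)=u\}}\theta_u^n+\mathbb{1}_{\{P_\theta(u,v)=v\}}\theta_v^n$, established first for minimal-singularity truncations and passed to the limit with control of non-pluripolar masses; this is precisely the content of the DDL propositions cited, and note that \eqref{Concentration of equilibrium measure} as stated in the paper only applies to $C^{1,\bar 1}$ data, so it cannot be invoked directly for unbounded finite-energy potentials. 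You flag this last point yourself, which is fair; just be aware that it, together with the quantitative mass-loss estimate in (i), is where the genuine content of the theorem lies.
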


We now briefly recall the construction of weak (Mabuchi) geodesics, following Berndtsson \cite{Berndtsson15} in the K\"ahler case and its extension to the big setting in \cite{DDL18,DDL18L1}. Given a curve \([0,1]\ni t\mapsto v_t\in \PSH(X,\theta)\), we consider its complexification on the strip
\[
T:=\{\tau\in \C:0<\Rep\tau<1\},
\]
defined by
\[
X\times T\ni (x,\tau)\longmapsto V(x,\tau):=v_{\Rep\tau}(x),
\]
where \(\pi_X:X\times T\to X\) is the projection. We say that \(t\mapsto v_t\) is a \textit{subgeodesic} if \(V\in \PSH(X\times T,\pi_X^*\theta)\). For \(u_0,u_1\in \PSH(X,\theta)\), we denote by
\[
\mathcal{S}(u_0,u_1):=\Bigl\{v_t: \text{subgeodesic},\ \limsup_{t\to 0^+}v_t\leq u_0,\ \limsup_{t\to 1^-}v_t\leq u_1\Bigr\}
\]
the family of subgeodesics with the prescribed boundary conditions. We define the \textit{weak (Mabuchi) geodesic} joining \(u_0\) and \(u_1\) by the upper envelope
\begin{equation}\label{weak geodesic}
u_t(x):=\sup_{\mathcal{S}(u_0,u_1)} v_t(x).
\end{equation}

It is well known that for each \(v_t\in \mathcal{S}(u_0,u_1)\), the map \(t\mapsto v_t\) is convex, since \(V\) is \(\pi_X^*\theta\)-psh and invariant under imaginary translations (i.e., \(i\R\)-invariant); cf.\ \cite[Theorem~I.5.13]{agbook}. In particular,
\[
v_t\leq (1-t)u_0+tu_1.
\]
Taking the supremum in \eqref{weak geodesic}, we conclude that \(t\mapsto u_t\) is convex and satisfies
\begin{equation}\label{convexity of geodesic}
u_t\leq (1-t)u_0+tu_1,\qquad \forall t\in [0,1].
\end{equation}
In particular, \(\mathrm{usc}(u_t)\leq (1-t)u_0+tu_1\) for all \(t\in[0,1]\), hence \(t\mapsto \mathrm{usc}(u_t)\in \mathcal{S}(u_0,u_1)\), and \(u_t\) is upper semicontinuous in \(x\). Equivalently, the complexification \(U(x,\tau):=u_{\Rep \tau}(x)\) belongs to \(\PSH(X\times T,\pi_X^*\theta)\cup\{-\infty\}\).

We record some basic facts when the endpoints are bounded; see \cite{DDL18L1,DDL18}.

\begin{lem}\label{minimal singular/finite energy geodesic}
If \(u_0,u_1\in \PSH(X,\theta)\cap L^\infty(X)\) and \(t\mapsto u_t\) is the weak geodesic joining them, then
\begin{itemize}
\item[(i)] \(\lim_{t\to 0^+}u_t=u_0\) and \(\lim_{t\to 1^-}u_t=u_1\);
\item[(ii)] \(u_t\in L^\infty(X)\) for all \(t\in[0,1]\);
\item[(iii)] there exists \(C=C(u_0,u_1)>0\) such that \(|u_t-u_{t'}|\leq C|t-t'|\) for all \(t,t'\in[0,1]\).
\end{itemize}
\end{lem}

\begin{proof}
Assume \(u_0,u_1\in \PSH(X,\theta)\cap L^\infty(X)\), and choose \(C>0\) such that \(u_0-C\leq u_1\leq u_0+C\). Consider the path
\[
[0,1]\ni t\longmapsto v_t:=\max(u_0-Ct,\ u_1-C(1-t)).
\]
Then \(v_t\in \mathcal{S}(u_0,u_1)\), and by \eqref{weak geodesic} and \eqref{convexity of geodesic},
\[
v_t\leq u_t\leq (1-t)u_0+tu_1,\qquad \forall t\in (0,1).
\]
Assertions (i)--(iii) follow immediately.
\end{proof}

\begin{thm}[\cite{DDL18L1}, Theorems~2.5 and~3.13; \cite{DDL18}, Theorem~3.12]\label{geodesic and MA energy}
If \([0,1]\ni t\mapsto u_t\) is a weak geodesic joining \(u_0,u_1\in \E^1(X,\theta)\), then \(E_\theta\) is affine in \(t\), i.e.,
\[
E_\theta(u_t)=(1-t)E_\theta(u_0)+tE_\theta(u_1).
\]
\end{thm}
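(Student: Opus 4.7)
The plan is to first reduce to the case when both endpoints have minimal singularity type, and then establish the affinity by approximating the weak geodesic with smooth $\epsilon$-geodesics whose complexifications solve a degenerate Monge--Amp\`ere equation with a small positive right-hand side.

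For the reduction, given arbitrary $u_0,u_1\in\E^1(X,\theta)$, I would set $u_k^j:=\max(u_k,V_\theta-j)$ for $k=0,1$, which has minimal singularity and decreases to $u_k$. Let $t\mapsto u_t^j$ denote the weak geodesic joining $u_0^j,u_1^j$. Since any subgeodesic with boundary values $u_0,u_1$ is also a subgeodesic with boundary values $u_0^j,u_1^j$, the envelope formula~\eqref{weak geodesic} gives $u_t\leq u_t^j$; the same formula shows $u_t^j$ is decreasing in $j$, and its pointwise limit is again a subgeodesic with boundary values $u_0,u_1$, forcing $u_t^j\downarrow u_t$. Continuity of $E_\theta$ along decreasing sequences (Theorem~\ref{Properties on Energy and E1}(ii)) then transfers affinity from $u_t^j$ to $u_t$ in the limit.

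For the minimal-singularity case, Lemma~\ref{minimal singular/finite energy geodesic} ensures $u_t$ is Lipschitz in $t$ and of minimal singularity throughout. The complexification $U$ on $X\times T$ is, by its envelope characterization, \emph{maximal}, so the non-pluripolar Monge--Amp\`ere $(\pi_X^*\theta+dd^c U)^{n+1}$ vanishes on $\Amp(\pi_X^*\{\theta\})$. I would approximate $U$ by smooth strictly $\pi_X^*\theta$-psh solutions $V_\epsilon$ of the Dirichlet problem with right-hand side $\epsilon\,(dd^c|\tau|^2)^{n+1}$ and appropriately regularized boundary data. Along such a smooth $\epsilon$-geodesic, a direct integration by parts on $X\times T$, exploiting the $S^1$-invariance of $V_\epsilon$, yields
\[
\tfrac{d^2}{dt^2}E_\theta(v^\epsilon_t)=\tfrac{c_n}{\vol(\{\theta\})}\int_X\bigl(\ddot v^\epsilon_t-|\partial\dot v^\epsilon_t|^2_{\theta_{v^\epsilon_t}}\bigr)\,\theta_{v^\epsilon_t}^n,
\]
which is the slice-density of $(\pi_X^*\theta+dd^cV_\epsilon)^{n+1}$, hence pointwise nonnegative with total mass $O(\epsilon)$. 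So $E_\theta(v^\epsilon_t)$ is convex with vanishing total second-derivative mass; letting $\epsilon\to 0$ (using the uniform Lipschitz control and monotone convergence at the endpoints from Lemma~\ref{minimal singular/finite energy geodesic}), we conclude that $E_\theta(u_t)$ coincides with the linear interpolant of its boundary values.

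The hard part will be the rigorous handling of the $\epsilon$-geodesic approximation in the big (non-ample) setting: $\{\theta\}$ need not contain a globally positive representative, so all Monge--Amp\`ere products must be interpreted in the BEGZ non-pluripolar sense over $\Amp(\{\theta\})$, and one has to ensure that the boundary regularization and the second-derivative identity survive the passage to the limit $\epsilon\to 0$. This technical bookkeeping is carried out in detail in \cite{DDL18L1,DDL18}, and constitutes the substantive content beyond the formal integration-by-parts calculation.
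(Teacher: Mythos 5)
First, note that the paper does not prove this statement at all: it is quoted verbatim from Darvas--Di Nezza--Lu (\cite{DDL18L1}, Theorems 2.5 and 3.13; \cite{DDL18}, Theorem 3.12), so your proposal has to be measured against those proofs rather than against anything in the text.

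Your reduction to minimal-singularity endpoints via the canonical truncations $\max(u_k,V_\theta-j)$, the monotonicity $u_t^j\downarrow u_t$, and the transfer of affinity by continuity of $E_\theta$ along decreasing sequences is sound and is essentially how the cited papers pass from minimal singularities to $\E^1$. The genuine gap is in the core step. You propose to approximate the geodesic by \emph{smooth, strictly $\pi_X^*\theta$-psh} solutions $V_\epsilon$ of a Dirichlet problem with small positive right-hand side and to run the pointwise second-derivative computation $\tfrac{d^2}{dt^2}E_\theta(v_t^\epsilon)=\tfrac{c_n}{\vol(\{\theta\})}\int_X(\ddot v_t^\epsilon-|\partial\dot v_t^\epsilon|^2_{\theta_{v_t^\epsilon}})\,\theta_{v_t^\epsilon}^n$. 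This is not available when $\{\theta\}$ is merely big: a big, non-K\"ahler class contains no smooth strictly positive representative (its K\"ahler currents are necessarily singular along the augmented base locus), so there are simply no globally smooth strictly $\pi_X^*\theta$-psh functions to solve your $\epsilon$-Dirichlet problem with, and no analogue of the Chen/B\l{}ocki regularity theory that makes $\epsilon$-geodesics smooth in the ample case. Solutions in big classes are at best of minimal singularity and locally bounded only on $\Amp(\{\theta\})$, so the integration by parts and the ``total second-derivative mass $O(\epsilon)$'' bookkeeping cannot even be set up; this is not a technicality deferred to the references, it is the point where the ample-case argument breaks. (As a side remark, the proposed right-hand side $\epsilon\,(dd^c|\tau|^2)^{n+1}$ is identically zero, since $dd^c|\tau|^2$ is pulled back from the one-dimensional base.)

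Ironically, you already state the fact that carries the actual proof: the complexification $U$ is a maximal $\pi_X^*\theta$-psh function, so $(\pi_X^*\theta+dd^cU)^{n+1}=0$ on $\Amp(\{\theta\})\times T$ in the non-pluripolar sense. The route taken in \cite{BBGZ,DDL18L1,DDL18} is to combine this with the identity expressing $dd^c_\tau\,E_\theta(u_\tau)$ as (a constant times) the pushforward $p_*\bigl((\pi_X^*\theta+dd^cU)^{n+1}\bigr)$ to the annulus $T$, valid for $S^1$-invariant families with minimal singularities. Vanishing of the pushforward gives that $\tau\mapsto E_\theta(u_\tau)$ is harmonic, hence, being $S^1$-invariant and continuous up to the boundary (Lemma~\ref{minimal singular/finite energy geodesic}), affine in $t=\log|\tau|$; your truncation argument then finishes the $\E^1$ case. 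Replacing the smoothing step by this pushforward identity would close the gap, but as written the $\epsilon$-geodesic strategy does not work in the big setting.
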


By convexity of \(t\mapsto u_t(x)\), the following limit exists:
\[
\dot{u}_0:=\lim_{t\to 0^+}\frac{u_t-u_0}{t}.
\]
We record the following useful estimate, which is similar to \cite[(6.6)]{BBGZ} and \cite[Proposition~3.2]{BF14}.
\begin{lem}\label{MA energy variation lemma}
Let \(u_0,u_1\in \E^1(X,\theta)\) satisfy \(u_1\leq u_0+C\) for some \(C\in \R\), and let \(t\mapsto u_t\) be the weak geodesic joining \(u_0\) and \(u_1\). Then \(\dot{u}_0\in L^1(X,\theta_{u_0}^n)\) and
\begin{equation}\label{MA energy variation estimate}
E_\theta(u_1)-E_\theta(u_0)=\frac{d}{dt}E_\theta(u_t)\Big|_{t=0}\leq\frac{1}{\vol(L)}\int_X \dot{u}_0\, \theta_{u_0}^n.
\end{equation}
\end{lem}

\begin{proof}
By convexity of \(t\mapsto u_t\), for any \(t\in (0,1]\), outside the pluripolar set \(\{u_0=-\infty\}\),
\[
\dot{u}_0(x)\leq \frac{u_t(x)-u_0(x)}{t}\leq u_1(x)-u_0(x)\leq C,\qquad \forall x\in X\setminus \{u_0=-\infty\}.
\]
On the other hand, by \eqref{Energy estimate}, we obtain
\[
\frac{E_\theta(u_t)-E_\theta(u_0)}{t}\leq \frac{1}{\vol(L)}\int_X\frac{u_t-u_0}{t}\,\theta_{u_0}^n,\qquad \forall t\in [0,1].
\]
By Theorem~\ref{geodesic and MA energy},
\[
\frac{d}{dt}E_\theta(u_t)\Big|_{t=0}=E_\theta(u_1)-E_\theta(u_0)>-\infty.
\]
This shows that \(\dot{u}_0\in L^1(X,\theta_{u_0}^n)\). Moreover, since
\[
\frac{u_t-u_0}{t}\downarrow \dot{u}_0,
\]
applying the monotone convergence theorem to
\[
C-\frac{u_t-u_0}{t}\uparrow C-\dot{u}_0
\]
gives
\[
\lim_{t\to 0^+}\int_X \left(C-\frac{u_t-u_0}{t}\right)\theta_{u_0}^n=\int_X (C-\dot{u}_0)\theta_{u_0}^n.
\]
By the integrability of \(\dot{u}_0\) with respect to \(\theta_{u_0}^n\), we obtain
\[
\frac{d}{dt}E_\theta(u_t)\Big|_{t=0}:=\lim_{t\to 0^+}\frac{E_\theta(u_t)-E_\theta(u_0)}{t}
\leq \lim_{t\to 0^+}\frac{1}{\vol(L)}\int_X\frac{u_t-u_0}{t}\,\theta_{u_0}^n
=\frac{1}{\vol(L)}\int_X\dot{u}_0\,\theta_{u_0}^n.
\]
\end{proof}

We also recall the following closely related differentiability result.

\begin{lem}[\cite{BB10}, Theorem~B; \cite{BBGZ}, Lemma~4.2]\label{variation formula of MA energy}
If \(u\in \E^1(X,\theta)\) and \(f\in C^0(X)\), then
\begin{equation}
\frac{d}{dt}\Big|_{t=0}E_\theta(P_\theta(u+tf))=\frac{1}{\vol(L)}\int_X f\,\theta_u^n.
\end{equation}
\end{lem}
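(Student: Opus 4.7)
The plan is to sandwich the difference quotient $[E_\theta(P_\theta(u+tf)) - E_\theta(u)]/t$ between two quantities that both converge to $\frac{1}{\vol(\{\theta\})}\int_X f\,\theta_u^n$ as $t\to 0$, using the energy inequalities \eqref{Energy estimate} together with the orthogonality property of Monge--Amp\`ere envelopes. Set $\varphi_t := P_\theta(u+tf)$. I would first observe that $u-|t|\|f\|_\infty \in \PSH(X,\theta)$ is dominated by $u+tf$, so $u - |t|\|f\|_\infty \leq \varphi_t \leq u+tf$, giving $\|\varphi_t - u\|_\infty \leq |t|\|f\|_\infty$. Together with Theorem~\ref{Properties on Energy and E1}(iii), this places $\varphi_t$ in $\E^1(X,\theta)$ and yields uniform convergence $\varphi_t \to u$ as $t\to 0$.

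For the upper sandwich, I would apply the right half of \eqref{Energy estimate} to the pair $(\varphi_t,u)$, using $\varphi_t - u \leq tf$, to obtain
\[
E_\theta(\varphi_t) - E_\theta(u) \leq \frac{1}{\vol(\{\theta\})}\int_X(\varphi_t - u)\,\theta_u^n \leq \frac{t}{\vol(\{\theta\})}\int_X f\,\theta_u^n.
\]
For the lower sandwich, the key input will be the orthogonality relation: the measure $\theta_{\varphi_t}^n$ is concentrated on the contact set $\{\varphi_t = u+tf\}$. Although \eqref{Concentration of equilibrium measure} is recorded in the excerpt only for $C^{1,\bar 1}$-data, the version we need with usc, finite-energy obstacle is classical in pluripotential theory (cf.\ \cite{BBGZ,DDL18}). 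It furnishes
\[
\int_X(\varphi_t - u)\,\theta_{\varphi_t}^n = t\int_X f\,\theta_{\varphi_t}^n,
\]
and then the left half of \eqref{Energy estimate} gives $E_\theta(\varphi_t) - E_\theta(u) \geq \frac{t}{\vol(\{\theta\})}\int_X f\,\theta_{\varphi_t}^n$.

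Dividing by $t$ (with the inequalities flipping for $t<0$) and letting $t\to 0$, the remaining task would be to establish $\int_X f\,\theta_{\varphi_t}^n \to \int_X f\,\theta_u^n$. Given the uniform convergence $\varphi_t\to u$ within $\E^1(X,\theta)$ and the continuity of $f$, I would deduce this from the weak continuity of the non-pluripolar Monge--Amp\`ere operator along uniformly convergent sequences of finite-energy potentials. The most delicate point in the plan is justifying the orthogonality for an envelope with possibly unbounded obstacle $u+tf$, which is why I would appeal to the literature cited above rather than reprove the concentration statement in this finite-energy setting.
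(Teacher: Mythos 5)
The paper itself does not prove this lemma; it is imported verbatim from \cite{BB10} (Theorem B) and \cite{BBGZ} (Lemma 4.2), and your sandwich argument — the two--sided energy inequality \eqref{Energy estimate}, orthogonality of the envelope, and weak continuity of the non--pluripolar Monge--Amp\`ere operator as $\varphi_t\to u$ uniformly — is essentially the proof given in those references, so the approach is sound. The two facts you defer to the literature are indeed available in the generality you need: the orthogonality relation $\int_X\bigl(P_\theta(h)-h\bigr)\,\theta_{P_\theta(h)}^n=0$ holds for obstacles of the form $h=u+tf$ with $u\in\E^1(X,\theta)$ and $f\in C^0(X)$ (such $h$ is usc and quasi-continuous, and since $\theta_{\varphi_t}^n$ puts no mass on pluripolar sets the locus $\{u=-\infty\}$ causes no trouble); and since $\|\varphi_t-u\|_{L^\infty}\le |t|\|f\|_{L^\infty}$, the potentials $\varphi_t$ have the same singularity type and the same (full) Monge--Amp\`ere mass as $u$ and converge to $u$ in capacity, which by the standard convergence results for non--pluripolar products (e.g.\ in \cite{DDL18}) gives $\theta_{\varphi_t}^n\to\theta_u^n$ weakly, hence $\int_X f\,\theta_{\varphi_t}^n\to\int_X f\,\theta_u^n$. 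Only cosmetic points remain to be written out: the inequalities reverse for $t<0$, at $t=0$ the orthogonality is trivial because $P_\theta(u)=u$, and all integrals appearing are finite because $\varphi_t-u$ and $f$ are bounded while $\theta_u^n$, $\theta_{\varphi_t}^n$ have finite total mass.
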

\section{Bergman Measures and Equilibrium in the Adjoint Setting}\label{Bergman Measures, Positivity of Direct Images, and Equilibrium}
Throughout this section, we will work under the set-up of \eqref{set-up}. Recall that in the introduction, for a smooth volume form $\nu$ and a function $u:X\to [-\infty,\infty)$, we define the Hilbert norm as in \eqref{Hilbert norm}:
\[
 (s_1,s_2)_{\Hilb_k(u)}:=\int_X (h_0^k\otimes \nu^{-1})(s_1,s_2)e^{-ku}d\nu
\]
and we say $u$ is admissible if $\Hilb_k(u)$ is finite on whole $H^0(X,L^k\otimes K_X)$. Clearly, any $u\in \PSH(X,\theta)\cap L^\infty$ is admissible. Moreover,  by \cite[Theorem 1.1]{DDL18}, any $u\in \E^1(X,\theta)$ has zero Lelong number and thus is admissible. 
\begin{lem}\label{Hilbert norm independent of the choice of volume form} For each fixed $k\in \N$, $\Hilb_k$ and  $E_k$ satisfy:
\begin{itemize}
\item[(i)] $\Hilb_k$ is independent of the choice of volume form $\nu$.
    \item[(ii)] if $u_1,u_2$ are admissible and $u_1\leq u_2$, then $\Hilb_k(u_1)\geq \Hilb_k(u_2)$ and $E_k(u_1)\leq E_k(u_2)$;
\end{itemize}
\end{lem}
\begin{proof}
    For (i), we can choose a holomorphic frame $\sigma$ for $K_X$ and $e_L$ for $L$ over an open set $U\subset X$. Then $h_0(e_L,e_L)=e^{-\Phi_0}$ for some $\Phi_0\in C^\infty(U)$ and  $
    |\sigma|_{\nu^{-1}}^2=i^{n^2}\sigma\wedge \bar{\sigma}/\nu$. For $s\in H^0(X,L^k\otimes K_X)$, we can write $s=fe_L^k\otimes \sigma$ for some $f\in \OO(U)$. Then 
    \[
    h_0^k\otimes \nu^{-1}(s,s)e^{-ku}d\nu=|f|^2e^{-k(\Phi_0+u)}\frac{i^{n^2}\sigma\wedge \bar{\sigma}}{\nu}d\nu=|f|^2e^{-k(\Phi_0+u)}i^{n^2}\sigma\wedge \bar{\sigma}.
    \]
As a result, $\|s\|^2_{\Hilb_k(u)}$ is independent of the choice of $\nu$. (ii) follows immediately from definition.
\end{proof}
Recall that we define Bergman kernel and Bergman measure in \eqref{Bergman measure}. By standard extremal characterization of Bergman kernel, we see that
\begin{equation}\label{extremal characterization}
 K^k_{u,\nu}=\sup\{(h_0^k\otimes \nu^{-1})(s,s):s\in H^0(X,L^k\otimes K_X),\|s\|_{\Hilb_k(u)}\leq 1\}.   
\end{equation}
Hence, Bergman kernel (resp., Bergman measure) is independent of the choice of $\Hilb_k(u)$-orthonormal basis on $H^0(X,L^k\otimes K_X)$. Moreover, as in the proof of Lemma \ref{Hilbert norm independent of the choice of volume form}, the Bergman measure $B^k_u$ is independent of the choice of smooth volume form $\nu$ and 
\[
\int_X B^k_u=\int_X K^k_{u,\nu}e^{-ku}d\nu=\sum_{j=1}^{N_k}\|S_j\|_{\Hilb_k(u)}^2=N_k.
\]
We also need the quantized analogue of \eqref{MA energy variation estimate} for the quantized Monge--Amp\`ere energy. The proof is inspired by \cite[Theorem~6.6]{BBGZ} and \cite[Lemma~5.1]{BB10}.

\begin{lem}\label{QMA variation}
Let \(u_0,u_1\in\PSH(X,\theta)\cap L^\infty(X)\), and let \([0,1]\ni t\mapsto u_t\) be the weak geodesic joining them. Then
\begin{equation}\label{QMA variation formula}
\frac{d}{dt}\Big|_{t=0^+}E_k(u_t)=\frac{1}{N_k}\int_X \dot{u}_0\, B^k_{u_0}.
\end{equation}
\end{lem}

\begin{proof}
Let \(\{S_1,\dots,S_{N_k}\}\) be an orthonormal basis of \(V_k:=H^0(X,L^k\otimes K_X)\) with respect to \(\Hilb_k(u_0)\). For \(t>0\), set \(H_k(t):=\Hilb_k(u_t)\), viewed in this basis as the Gram matrix
\[
H_k(t)_{ij}:=(S_i,S_j)_{\Hilb_k(u_t)}=\int_X (h_0^k\otimes \nu^{-1})(S_i,S_j)\,e^{-ku_t}\,d\nu.
\]

We first show that \(E_k(u_t)\) is right-differentiable at \(t=0\). By Lemma~\ref{minimal singular/finite energy geodesic}(iii), there exists \(C>0\) such that \(\sup_X|u_t-u_0|\le C t\) for all \(t\in[0,1]\). For each \(i,j\),
\[
\frac{H_k(t)_{ij}-H_k(0)_{ij}}{t}
=\int_X (h_0^k\otimes \nu^{-1})(S_i,S_j)\,e^{-ku_0}\,
\frac{e^{-k(u_t-u_0)}-1}{t}\,d\nu.
\]
By the mean value theorem,
\[
\frac{e^{-k(u_t(x)-u_0(x))}-1}{t}
=-k e^{-k\xi(t,x)}\,\frac{u_t(x)-u_0(x)}{t},
\]
for some \(\xi(t,x)\) between \(0\) and \(u_t(x)-u_0(x)\). Since \(|\xi(t,x)|\le |u_t(x)-u_0(x)|\le Ct\to 0\) as \(t\to 0^+\), we have pointwise
\[
\lim_{t\to 0^+}\frac{e^{-k(u_t(x)-u_0(x))}-1}{t}=-k\dot{u}_0(x),
\]
and the integrand is uniformly dominated by a constant multiple of
\((h_0^k\otimes \nu^{-1})(S_i,S_j)e^{-ku_0}\) (using boundedness of \(\frac{u_t-u_0}{t}\) from Lemma~\ref{minimal singular/finite energy geodesic}(iii)). Hence, dominated convergence yields
\[
\lim_{t\to 0^+}\frac{H_k(t)_{ij}-H_k(0)_{ij}}{t}
=-k\int_X (h_0^k\otimes \nu^{-1})(S_i,S_j)\,\dot{u}_0\,e^{-ku_0}\,d\nu.
\]

Since \(H_k(0)=I_{N_k}\), we have
\[
\frac{d}{dt}\Big|_{t=0^+}\log\det H_k(t)
=\tr\!\left(\frac{d}{dt}\Big|_{t=0^+}H_k(t)\right)
=-k\sum_{j=1}^{N_k}\int_X (h_0^k\otimes \nu^{-1})(S_j,S_j)\,\dot{u}_0\,e^{-ku_0}\,d\nu.
\]
Using \(B^k_{u_0}=K^k_{u_0,\nu}e^{-ku_0}d\nu=\sum_{j=1}^{N_k}(h_0^k\otimes \nu^{-1})(S_j,S_j)e^{-ku_0}d\nu\) and the definition of \(E_k\), we obtain \eqref{QMA variation formula}.
\end{proof}

The positivity of direct images after Berndtsson--P\u{a}un \cite[Theorem~3.5]{BP08} (see also \cite[Proposition~3.4(ii)]{BF14}) implies the following.

\begin{thm}\label{convexity of quantized Monge--Ampere energy along geodesic}
Let \(t\mapsto u_t\) be the weak geodesic joining \(u_0,u_1\in \PSH(X,\theta)\cap L^\infty(X)\). Then \(t\mapsto E_k(u_t)\) is convex on \([0,1]\).
\end{thm}

Similarly, we have the following differentiability statement for \(E_k\), due to \cite[Lemma~5.1]{BB10} and \cite[Proposition~2.4]{BBWN}, adapted to our setting.

\begin{lem}\label{variational formula for QMA along affine path}
If \(\phi,f\in L^\infty(X)\), then \(t\mapsto E_k(\phi+tf)\) is concave and differentiable at \(t=0\), with
\begin{equation}\label{QMA affine variation}
\frac{d}{dt}\Big|_{t=0}E_k(\phi+tf)=\frac{1}{N_k}\int_X f\,B^k_{\phi}.
\end{equation}
\end{lem}

\begin{proof}
The differentiability is proved in \cite[Lemma~5.1]{BB10} for smooth \(\phi,f\). The only regularity needed there is the ability to differentiate under the integral sign, which still holds for bounded \(\phi,f\) by dominated convergence. Concavity follows from \cite[Proposition~2.4]{BBWN} (see also the direct Bergman-kernel computation in \cite[Lemma~2.2]{BW08}).
\end{proof}

We now recall Berman's \textit{local Morse inequality}, as stated in \cite[Lemma~4.1]{Ber09} (see \cite[Theorem~2.1]{BoSurvey} and \cite{BermanlocalMorse} for proofs).

\begin{thm}\label{Berman local Morse inequality}
Under the set-up \eqref{set-up}, for \(\phi\in C^\infty(X)\),
\begin{itemize}
\item[(i)] there exists a constant \(C>0\), independent of \(k\) and \(\phi\), such that
\[
\frac{n!}{k^n}K^k_{\phi,\nu}\, e^{-k\phi}\leq C\qquad \text{on }X;
\]
\item[(ii)] we have the pointwise asymptotic estimate on \(X\):
\[
\limsup_{k\to\infty}\frac{n!}{k^n}K_{\phi,\nu}^k(x)\,e^{-k\phi(x)}
\leq \mathbf{1}_{X_+(\phi)}(x)\,\frac{\theta_\phi^n}{\nu}(x),
\]
where \(X_+(\phi):=\{x\in X:\theta_\phi(x)>0\}\).
\end{itemize}
\end{thm}

We now prove an analogue of \cite[Lemma~4.2 and Theorem~4.6]{Ber09}.

\begin{thm}[Global domination and weak convergence]\label{prop:bergman-convergence}
Under the set-up \eqref{set-up}, for \(\phi\in C^\infty(X)\) there exist constants \(C_1=C_1(\nu,\omega)>0\) and \(C_2=C_2(\nu,\omega)>0\) such that for all \(k\),
\begin{equation}\label{global domination}
\frac{n!}{k^n}K^k_{\phi,\nu}
\leq C_2\,\exp\!\Big(k\big(P_{\theta+\delta_k\omega}(\phi)-\phi\big)\Big)
\quad \text{on }X,
\qquad \delta_k:=\frac{C_1}{k}.
\end{equation}
Moreover, \(N_k^{-1}B^k_{\phi}\) converges weakly to the equilibrium measure \(\vol(L)^{-1}\theta_{P_\theta(\phi)}^n\).
\end{thm}

\begin{proof}
Fix an \(\Hilb_k(\phi)\)-orthonormal basis \(\{S_1,\dots,S_{N_k}\}\) of \(H^0(X,L^k\otimes K_X)\), and consider the Kodaira map
\[
\mathrm{Kod}_k:U:=X\setminus Z \longrightarrow \P^{N_k-1},\qquad
x\longmapsto [S_1(x):\cdots:S_{N_k}(x)],
\]
where \(Z\) is the base locus of \(L^k\otimes K_X\). Then \(\frac{1}{k}\log K_{\phi,\nu}^k\in C^\infty(U)\), and on \(U\),
\[
\frac{1}{k}dd^c\log K^k_{\phi,\nu}
=-\frac{1}{k}c_1(L^k\otimes K_X,h_0^k\otimes \nu^{-1})
+\frac{1}{k}\mathrm{Kod}_k^*\omega_{FS},
\]
where \(\omega_{FS}\) is the Fubini--Study form on \(\P^{N_k-1}\). Since
\[
\frac{1}{k}c_1(L^k\otimes K_X,h_0^k\otimes \nu^{-1})
=\theta+\frac{1}{k}c_1(K_X,\nu^{-1}),
\]
there exists \(C_1=C_1(\nu,\omega)>0\) such that
\[
\theta+\frac{1}{k}c_1(K_X,\nu^{-1})\geq \theta-\delta_k\omega,\qquad \delta_k=\frac{C_1}{k}.
\]
Consequently, \(\frac{1}{k}\log K_{\phi,\nu}^k\in \PSH(X,\theta+\delta_k\omega)\) in the sense of currents, i.e.,
\[
\theta+\delta_k\omega+\frac{1}{k}dd^c\log K_{\phi,\nu}^k\geq 0.
\]
By Theorem~\ref{Berman local Morse inequality}(i), there exists \(C>0\) such that
\[
\frac{1}{k}\log K_{\phi,\nu}^k-\phi
\leq \frac{1}{k}\log\!\Big(\frac{Ck^n}{n!}\Big)
=\frac{\log C-\log n!}{k}+\frac{n\log k}{k}.
\]
Using the translation property of envelopes,
\[
P_{\theta+\delta_k\omega}(\phi+C')=P_{\theta+\delta_k\omega}(\phi)+C',
\]
we obtain
\[
\frac{1}{k}\log K_{\phi,\nu}^k
\leq P_{\theta+\delta_k\omega}(\phi)+\frac{\log C-\log n!}{k}+\frac{n\log k}{k},
\]
which implies \eqref{global domination} after exponentiating.

Next, note that \(\delta_k\downarrow 0\) implies \(P_{\theta+\delta_k\omega}(\phi)\downarrow P_\theta(\phi)\). Let \(D:=\{\phi=P_\theta(\phi)\}\) be the contact set. For \(\epsilon>0\), consider the open set
\[
U_\epsilon:=\{P_\theta(\phi)+\epsilon<\phi\}\subset X.
\]
For each fixed \(x\in U_\epsilon\), we have
\[
P_{\theta+\delta_k\omega}(\phi)(x)-\phi(x)\leq -\epsilon/2
\]
for all sufficiently large \(k\). Combining this with \eqref{global domination} yields
\[
\frac{n!}{k^n}K^k_{\phi,\nu}(x)e^{-k\phi(x)}\longrightarrow 0
\qquad (x\in U_\epsilon).
\]
Using \eqref{stability of volume}, we have
\[
\lim_{k\to\infty}\frac{k^n/n!}{N_k}=\frac{1}{\vol(L)},
\]
and dominated convergence therefore gives
\[
\lim_{k\to\infty}\int_{U_\epsilon} \frac{1}{N_k}B_\phi^k
=\lim_{k\to\infty}\frac{1}{N_k}\int_{U_\epsilon} K^k_{\phi,\nu}e^{-k\phi}\,d\nu
=0.
\]
By the Banach--Alaoglu theorem, there exist a subsequence \(k_\ell\to\infty\) and a probability measure \(\beta\) such that
\[
N_{k_\ell}^{-1}B^{k_\ell}_\phi\rightharpoonup \beta.
\]
Since \(U_\epsilon\) is open, the Portmanteau theorem yields
\[
\beta(U_\epsilon)\leq \liminf_{\ell\to\infty}\int_{U_\epsilon}\frac{1}{N_{k_\ell}}B_\phi^{k_\ell}=0.
\]
Thus \(\beta(U_\epsilon)=0\) for every \(\epsilon>0\). Since
\[
X\setminus D=\{\phi>P_\theta(\phi)\}=\bigcup_{m=1}^\infty U_{1/m},
\]
it follows that \(\beta(X\setminus D)=0\). Hence \(\supp\beta\subset D\).

Finally, let \(\psi\in C^0(X)\) be nonnegative. Then Theorem~\ref{Berman local Morse inequality}(ii) and dominated convergence give
\[
\limsup_{\ell\to\infty}\frac{n!}{k_\ell^n}\int_X \psi\, K^{k_\ell}_{\phi,\nu} e^{-k_\ell\phi}\,d\nu
\leq \int_X \psi\,\mathbf{1}_{X_+(\phi)}\,\theta_\phi^n.
\]
Moreover, by  \eqref{Concentration of equilibrium measure} and \eqref{contact set is contained in semipositive set},
\[
\theta_{P_\theta(\phi)}^n=\mathbf{1}_D\,\theta_\phi^n=\mathbf{1}_{X_+(\phi)}\,\theta_\phi^n.
\]
Hence
\[
\int_X\psi\,d\beta\leq \int_X\psi\,\frac{1}{\vol(L)}\,\theta_{P_\theta(\phi)}^n.
\]
Since \(\beta\) is a probability measure supported on \(D\), this forces
\[
\beta=\vol(L)^{-1}\theta_{P_\theta(\phi)}^n.
\]
Therefore the whole sequence \(N_k^{-1}B^k_\phi\) converges weakly to \(\vol(L)^{-1}\theta_{P_\theta(\phi)}^n\).
\end{proof}

Finally, we recall a well-known implication proved in \cite[Theorem~A]{BB10}. The argument carries over verbatim to the adjoint setting (compare also \cite[Prop.~6.4]{DX24}).

\begin{cor}\label{Quantization of Monge-Ampere energy for smooth functions}
If \(L\) is big and \(\phi\in C^\infty(X)\), then
\[
\lim_{k\to\infty} E_k(\phi)=E_\theta\big(P_\theta(\phi)\big).
\]
\end{cor}

\begin{proof}
Applying Lemma~\ref{variational formula for QMA along affine path} to the affine path \(t\mapsto t\phi\), we obtain
\[
E_k(\phi)=\int_0^1 \frac{d}{dt}E_k(t\phi)\,dt
=\int_0^1 \left(\frac{1}{N_k}\int_X \phi\,B^k_{t\phi}\right)dt.
\]
By Theorem~\ref{prop:bergman-convergence}, for each fixed \(t\in[0,1]\), \(N_k^{-1}B^k_{t\phi}\) converges weakly to \(\vol(L)^{-1}\theta_{P_\theta(t\phi)}^n\). Hence,
\[
\lim_{k\to\infty}\frac{1}{N_k}\int_X \phi\,B^k_{t\phi}
=\frac{1}{\vol(L)}\int_X \phi\,\theta_{P_\theta(t\phi)}^n
=\frac{d}{dt}E_\theta\big(P_\theta(t\phi)\big),
\]
where we used Lemma~\ref{variation formula of MA energy} in the last identity. Since each \(N_k^{-1}B^k_{t\phi}\) is a probability measure and \(\left|\frac{1}{N_k}\int_X \phi\,B^k_{t\phi}\right|\le \|\phi\|_{L^\infty(X)}\) uniformly in \(k,t\), the result follows from dominated convergence.
\end{proof}
\section{Peak Sections and Bergman Kernel Comparison}\label{peak section and Bergman Kernel Comparison}
We continue to work under the set-up \eqref{set-up}. Since \(X\) is projective, we fix an ample line bundle \(A\) on \(X\), equipped with a smooth Hermitian metric \(g\) such that \(\omega:=c_1(A,g)\) is a K\"ahler form. We also take the smooth volume form \(\nu:=\omega^n/n!\) induced by \(\omega\).

For \(p\in\N\), we further consider the twisted spaces \(H^0(X,L^k\otimes K_X\otimes A^{p})\). Given a function \(u:X\to[-\infty,\infty)\), we define the corresponding Hilbert norm by
\[
\Hilb_{k,p}(u)(\sigma,\sigma)=\|\sigma\|_{\Hilb_{k,p}(u)}^{2}
:=\int_X (h_0^k\otimes \nu^{-1}\otimes g^{p})(\sigma,\sigma)\,e^{-ku}\,d\nu,
\qquad \sigma\in H^0(X,L^k\otimes K_X\otimes A^{p}),
\]
whenever the integral is finite. We say that \(u\) is \textit{admissible} (for \((k,p)\)) if \(\Hilb_{k,p}(u)\) is finite on the whole space \(H^0(X,L^k\otimes K_X\otimes A^{p})\).

For such an admissible function \(u\), we similarly define the Bergman kernel (resp.\ Bergman measure) by
\[
K^{k,p}_{u,\nu}:=\sum_{j=1}^{N_{k,p}}(h_0^k\otimes \nu^{-1}\otimes g^{p})(\sigma_j,\sigma_j),
\qquad
B_{u}^{k,p}:=K_{u,\nu}^{k,p}\,e^{-ku}\,d\nu,
\]
where \(\{\sigma_1,\dots,\sigma_{N_{k,p}}\}\) is an \(\Hilb_{k,p}(u)\)-orthonormal basis and \(N_{k,p}:=h^0(X,L^k\otimes K_X\otimes A^p)\).
As in Lemma~\ref{Hilbert norm independent of the choice of volume form}, \(\Hilb_{k,p}\) and \(B_{u}^{k,p}\) are independent of the choice of volume form \(\nu\). Moreover, by the extremal characterization, \(K_{u,\nu}^{k,p}\) and \(B_u^{k,p}\) are independent of the choice of \(\Hilb_{k,p}(u)\)-orthonormal basis:
\begin{equation}\label{extremal characterization for twist}
K_{u,\nu}^{k,p}(x)
=\sup\Bigl\{(h_0^k\otimes \nu^{-1}\otimes g^{p})(\sigma,\sigma)(x):
\sigma\in H^0(X,L^k\otimes K_X\otimes A^{p}),\ \|\sigma\|_{\Hilb_{k,p}(u)}\leq 1\Bigr\}.
\end{equation}

For \(p\in\N\), we also denote by \(\mathrm{H}_p^A\) the Hilbert norm on \(H^0(X,A^p)\) given by
\[
H_p^A(t,t)=\|t\|_{H_p^A}^2:=\int_X g^p(t,t)\,\frac{\omega^n}{n!}.
\]
For an \(\mathrm{H}_p^A\)-orthonormal basis \(\{t_1,\dots,t_{d_p}\}\), we define the Bergman kernel of \(A^p\) by
\begin{equation}\label{Bergman kernel for ample line bundle}
K^p_A(x):=\sum_{j=1}^{d_p} g^p(t_j,t_j)(x)
=\sup\{g^p(s,s)(x):\|s\|_{\mathrm{H}_p^A}\leq 1\},
\qquad d_p:=h^0(X,A^p).
\end{equation}

We will need a pointwise semi-classical Ohsawa--Takegoshi type statement. It follows, for instance, from the general results of Finski \cite[Theorem~1.10]{Finski1} or from off-diagonal asymptotics of Bergman kernels \cite{DLM,MM2}. We give a direct proof using the (on-diagonal) Bergman kernel asymptotics (see also \cite[Proposition~5.2]{BD26}).

\begin{lem}\label{semi-classical Ohsawa--Takegoshi}
There exists a sequence \(D_p\downarrow 1\) such that for any \(x\in X\), there exists \(t_x\in H^0(X,A^p)\) satisfying
\[
g^p(t_x,t_x)(x)=1,
\qquad
\sup_X g^p(t_x,t_x)\leq D_p^2.
\]
\end{lem}

\begin{proof}
By the diagonal Bergman kernel asymptotics \cite{Tian,Bouche,Catlin,Zelditch}, we have
\[
p^{-n}K_A^p\to 1\quad \text{uniformly on }X.
\]
Hence, there exist \(D_p\downarrow 1\) and \(p_0\in\N\) such that for all \(p\geq p_0\),
\[
\frac{p^n}{D_p}\leq K_A^p(y)\leq p^n D_p,\qquad \forall y\in X.
\]
By \eqref{Bergman kernel for ample line bundle}, we can choose \(\widetilde t_x\in H^0(X,A^p)\) with \(\|\widetilde t_x\|_{\mathrm{H}_p^A}=1\) and
\[
g^p(\widetilde t_x,\widetilde t_x)(x)=K_A^p(x).
\]
By extremal characterization, \(g^p(\widetilde t_x,\widetilde t_x)(y)\leq K_A^p(y)\) for all \(y\in X\). Setting
\[
t_x:=\frac{\widetilde t_x}{\sqrt{K_A^p(x)}},
\]
we have \(g^p(t_x,t_x)(x)=1\) and, for all \(y\in X\),
\[
g^p(t_x,t_x)(y)\leq \frac{K_A^p(y)}{K_A^p(x)}\leq D_p^2.
\]
\end{proof}

The main result of this section is the following comparison theorem between Bergman kernels, which is crucial for the proof of Theorem~\ref{quantization of MA energy}.

\begin{thm}\label{Bergman kernel comparison}
For \(u\in \PSH(X,\theta)\cap L^\infty(X)\) and any \(k\in\N\), there exists a sequence \(D_p\downarrow 1\) such that
\[
K^{k}_{u,\nu}\leq D_p^2\,K^{k,p}_{u,\nu},\quad p\gg 1.
\]
\end{thm}

\begin{proof}
Fix \(x\in X\), and let \(t_x\in H^0(X,A^p)\) be given by Lemma~\ref{semi-classical Ohsawa--Takegoshi}, so that
\[
g^p(t_x,t_x)(x)=1,
\qquad
M_p:=\sup_X g^p(t_x,t_x)\leq D_p^2.
\]
Let \(\sigma\in H^0(X,L^k\otimes K_X)\) satisfy \(\|\sigma\|_{\Hilb_k(u)}\le 1\). Define
\[
\tau:=M_p^{-1/2}\,\sigma\otimes t_x\in H^0\!\left(X,L^k\otimes K_X\otimes A^p\right).
\]
Then
\begin{align*}
\|\tau\|_{\Hilb_{k,p}(u)}^2
&=\int_X (h_0^k\otimes \nu^{-1}\otimes g^p)(\tau,\tau)\,e^{-ku}\,d\nu =\frac{1}{M_p}\int_X (h_0^k\otimes \nu^{-1})(\sigma,\sigma)\,g^p(t_x,t_x)\,e^{-ku}\,d\nu \\
&\le \int_X (h_0^k\otimes \nu^{-1})(\sigma,\sigma)\,e^{-ku}\,d\nu
\le 1.
\end{align*}
Since \(g^p(t_x,t_x)(x)=1\), the extremal characterization \eqref{extremal characterization for twist} yields
\begin{align*}
K_{u,\nu}^{k,p}(x)
&\ge (h_0^k\otimes \nu^{-1}\otimes g^p)(\tau,\tau)(x) \\
&=\frac{1}{M_p}(h_0^k\otimes \nu^{-1})(\sigma,\sigma)(x)\,g^p(t_x,t_x)(x)
=\frac{1}{M_p}(h_0^k\otimes \nu^{-1})(\sigma,\sigma)(x).
\end{align*}
Taking the supremum over \(\sigma\) with \(\|\sigma\|_{\Hilb_k(u)}\le 1\) gives
\(
K^k_{u,\nu}(x)\le M_p\,K_{u,\nu}^{k,p}(x)\le D_p^2 K_{u,\nu}^{k,p}(x),
\)
as claimed.
\end{proof}

We will also need an \(\epsilon\)-twisted analogue of the local Morse inequality.

\begin{prop}\label{eps twisted local morse}
Fix \(\epsilon>0\), and set \(p_k:=\lfloor \epsilon k\rfloor\). Then
\begin{equation}\label{p-twisted-local-holomorphic-morse-eps}
\limsup_{k\to\infty}N_k^{-1}B_0^{k,p_k}
\leq \frac{(\theta+\epsilon\omega)^n}{\vol(L)},
\qquad N_k:=h^0(X,L^k\otimes K_X).
\end{equation}
\end{prop}

\begin{proof}
Let \(K_{0,\nu}^{k,p_k}\) be the Bergman kernel of \(H^0(X,L^k\otimes K_X\otimes A^{p_k})\) defined above, with \(\nu=\omega^n/n!\). For \(x\in X\), choose a coordinate polydisk \(P(x,r)\subset X\). On \(P(x,r)\), write
\[
h_0=e^{-\Phi_0},\qquad g=e^{-\eta},\qquad \nu=e^{-\rho}\,d\lambda,
\]
where \(d\lambda\) denotes Lebesgue measure in these coordinates. It is well known (see, e.g., \cite[Lemma~2.3]{BD26}) that
\[
K^{k,p_k}_{0,\nu}(x)
\leq K_{P(x,r),\,k\Phi_0+p_k\eta+\rho}(x)\,e^{-(k\Phi_0(x)+p_k\eta(x)+\rho(x))},
\]
where \(K_{P(x,r),\,k\Phi_0+p_k\eta+\rho}\) denotes the local Bergman kernel for \(L^2\)-holomorphic functions on \(P(x,r)\) with weight \(e^{-(k\Phi_0+p_k\eta+\rho)}\).
By the standard submean inequality argument (see \cite[Lemma~3.1(ii)]{Berman09b}, and also \cite[Proposition~2.5]{BF14}), we obtain
\[
\limsup_{k\to\infty} k^{-n}\,K_{P(x,r),\,k\Phi_0+p_k\eta+\rho}(x)\,
e^{-(k\Phi_0(x)+p_k\eta(x)+\rho(x))}
\leq \frac{(\theta+\epsilon\omega)^n}{\omega^n}(x),
\]
since \((\theta+\epsilon\omega)(x)>0\) for all \(x\in X\) (as \(\theta\ge 0\) and \(\omega>0\)).

Finally, using \eqref{stability of volume} (to identify the asymptotic \(\frac{k^n/n!}{N_k}\to \frac{1}{\vol(L)}\)), we conclude
\[
\limsup_{k\to\infty}N_k^{-1}B^{k,p_k}_0
=\limsup_{k\to\infty}\frac{k^n/n!}{N_k}\,k^{-n}K^{k,p_k}_{0,\nu}\,\omega^n
\leq \frac{1}{\vol(L)}(\theta+\epsilon\omega)^n,
\]
which is \eqref{p-twisted-local-holomorphic-morse-eps}.
\end{proof}\section{Proofs of Theorem~\ref{quantization of MA energy} and Corollary~\ref{convergence of Bergman measures}}
Recall that \(X\) is a compact projective manifold of dimension \(n\), and \(L\) is a big and semipositive line bundle over \(X\), equipped with a fixed smooth Hermitian metric \(h_0\) whose Chern curvature form \(\theta:=c_1(L,h_0)\) satisfies \(\theta\geq 0\). We first deduce Corollary~\ref{convergence of Bergman measures} from Theorem~\ref{quantization of MA energy}. The proof is inspired by Berman--Boucksom--Witt Nystr\"om \cite{BBWN}; see also \cite[Theorem~4.3]{BD26}.

\begin{proof}[Proof of Corollary~\ref{convergence of Bergman measures}]
Fix \(u\in \PSH(X,\theta)\cap L^\infty(X)\). Let \(f\in C^0(X)\) and \(t\in\R\). Then \(u+tf\) is bounded. By Lemma~\ref{variational formula for QMA along affine path}, the function \(t\mapsto E_k(u+tf)\) is concave and differentiable at \(t=0\), with
\[
\frac{d}{dt}\Big|_{t=0}E_k(u+tf)=\frac{1}{N_k}\int_X f\,B_u^k.
\]

By definition of the envelope,
\[
u+tf\ge P_\theta(u+tf)\ge \inf_X(u+tf)>-\infty,
\]
hence \(P_\theta(u+tf)\in \PSH(X,\theta)\cap L^\infty(X)\). Therefore Theorem~\ref{quantization of MA energy} applies and gives, for every \(t\in\R\),
\[
\liminf_{k\to\infty}E_k(u+tf)
\ge \lim_{k\to\infty}E_k\!\big(P_\theta(u+tf)\big)
=E_\theta\!\big(P_\theta(u+tf)\big),
\]
and in particular
\[
\lim_{k\to\infty}E_k(u)=E_\theta(u).
\]
On the other hand, by Lemma~\ref{variation formula of MA energy}, the map \(t\mapsto E_\theta(P_\theta(u+tf))\) is differentiable at \(t=0\), and
\[
\frac{d}{dt}\Big|_{t=0}E_\theta\!\big(P_\theta(u+tf)\big)
=\frac{1}{\vol(L)}\int_X f\,\theta_u^n.
\]

We now compare derivatives using concavity. Fix \(t>0\). By concavity of \(t\mapsto E_k(u+tf)\),
\[
\frac{1}{N_k}\int_X f\,B_u^k
=\frac{d}{dt}\Big|_{t=0}E_k(u+tf)
\ge \frac{E_k(u+tf)-E_k(u)}{t}.
\]
Taking \(\liminf_{k\to\infty}\) and using the limits above yields
\[
\liminf_{k\to\infty}\frac{1}{N_k}\int_X f\,B_u^k
\ge \frac{E_\theta(P_\theta(u+tf))-E_\theta(u)}{t}.
\]
Letting \(t\downarrow 0\), we obtain
\[
\liminf_{k\to\infty}\frac{1}{N_k}\int_X f\,B_u^k
\ge \frac{1}{\vol(L)}\int_X f\,\theta_u^n.
\]

Similarly, for \(t<0\), concavity gives
\[
\frac{1}{N_k}\int_X f\,B_u^k
\le \frac{E_k(u+tf)-E_k(u)}{t}.
\]
Taking \(\limsup_{k\to\infty}\) and letting \(t\uparrow 0\), we obtain
\[
\limsup_{k\to\infty}\frac{1}{N_k}\int_X f\,B_u^k
\le \frac{1}{\vol(L)}\int_X f\,\theta_u^n.
\]
Combining the two inequalities, we conclude that
\[
\lim_{k\to\infty}\frac{1}{N_k}\int_X f\,B_u^k
=\frac{1}{\vol(L)}\int_X f\,\theta_u^n.
\]
Since this holds for all \(f\in C^0(X)\), it follows that
\[
N_k^{-1}B_u^k\rightharpoonup \vol(L)^{-1}\theta_u^n.
\]
\end{proof}
We now turn to the upper bound in Theorem~\ref{quantization of MA energy}, which holds for any finite-energy potential; this is outlined in \cite[Theorem~3.5]{BF14}.

\begin{thm}\label{upper bound estimate}
For \(u\in \E^1(X,\theta)\), we have
\[
\limsup_{k\to\infty}E_k(u)\leq E_\theta(u).
\]
\end{thm}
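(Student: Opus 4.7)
The plan is a two-step argument: first establish the upper bound for bounded potentials via smooth approximation from above, then reduce the general finite-energy case by truncation. The ingredients needed are the monotonicity of $E_k$ (Lemma~\ref{properties of Hilb and QMA}(ii)), the quantization for smooth potentials (Corollary~\ref{Quantization of MOnge-Ampere energy for smooth functions}), continuity of $E_\theta$ along decreasing sequences (Theorem~\ref{Properties on Energy and E1}(ii)), and the standard behavior of envelopes under monotone limits.

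For the reduction step, given $u\in \E^1(X,\theta)$, take the canonical truncations $u_j:=\max(u,-j)\in \PSH(X,\theta)\cap L^\infty$. Since $L$ is semipositive (hence nef), the admissibility corollary ensures $u$ is admissible, while boundedness gives admissibility of $u_j$. From $u\leq u_j$ and Lemma~\ref{properties of Hilb and QMA}(ii) one gets $E_k(u)\leq E_k(u_j)$ for every $k$, so $\limsup_{k\to\infty}E_k(u)\leq \limsup_{k\to\infty}E_k(u_j)$. Once the bounded case is established this produces $\limsup_{k\to\infty}E_k(u)\leq E_\theta(u_j)$ for every $j$, and letting $j\to\infty$ along $u_j\downarrow u$ the continuity of $E_\theta$ forces $E_\theta(u_j)\to E_\theta(u)$.

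For the bounded case $u\in \PSH(X,\theta)\cap L^\infty$, I would choose smooth $\phi_j\in C^\infty(X)$ with $\phi_j\downarrow u$; any bounded upper-semicontinuous function on a compact manifold admits such a smooth approximation from above, and crucially one does not need $\theta$-plurisubharmonicity of the approximants for what follows. Monotonicity again gives $E_k(u)\leq E_k(\phi_j)$, and Corollary~\ref{Quantization of MOnge-Ampere energy for smooth functions} applied to the smooth $\phi_j$ yields
\[
\limsup_{k\to\infty} E_k(u)\;\leq\;\lim_{k\to\infty}E_k(\phi_j)\;=\;E_\theta\bigl(P_\theta(\phi_j)\bigr).
\]
To pass $j\to\infty$, I would verify that $P_\theta(\phi_j)\downarrow u$: the sequence is decreasing with $P_\theta(\phi_j)\geq P_\theta(u)=u$, its decreasing limit $v$ lies in $\PSH(X,\theta)$, and $v\leq \phi_j$ for each $j$ forces $v\leq u$ in the limit. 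Continuity of $E_\theta$ along decreasing sequences then delivers $E_\theta(P_\theta(\phi_j))\to E_\theta(u)$, completing the bounded case.

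No serious obstacle arises for the upper bound; the only mild technical point is the existence of a smooth decreasing sequence $\phi_j\downarrow u$ dominating a bounded $\theta$-psh function, which is classical. Notably, the Bergman-kernel comparison of Section~\ref{peak section and Bergman Kernel Comparison} and the $\varepsilon$-twisted Morse inequality \eqref{epsilon twisted local holomorphic morse} play no role here: they will enter crucially in the matching lower bound, where approximation within $\PSH(X,\theta)$ from below is obstructed by the degeneracy of $\theta$ and must be carried out in the ample-twisted setting $L\otimes A^\varepsilon$.
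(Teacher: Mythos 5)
Your proof is correct and follows essentially the same route as the paper: smooth approximation $\phi_j\downarrow u$ from above, monotonicity of $E_k$, Corollary~\ref{Quantization of MOnge-Ampere energy for smooth functions}, the observation that $P_\theta(\phi_j)\downarrow u$, and continuity of $E_\theta$ along decreasing sequences. The only difference is your preliminary truncation to the bounded case, which is harmless but superfluous — the smooth-approximation argument applies directly to any usc $u\in\E^1(X,\theta)$, exactly as the paper does.
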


\begin{proof}
Since \(u\) is upper semicontinuous, we can find a sequence \(\phi_j\in C^\infty(X)\) such that \(\phi_j\downarrow u\). Clearly, each \(\phi_j\) is admissible, and Lemma~\ref{Hilbert norm independent of the choice of volume form}(ii) gives
\[
E_k(u)\leq E_k(\phi_j).
\]
By Corollary~\ref{Quantization of Monge-Ampere energy for smooth functions},
\[
\limsup_{k\to\infty}E_k(u)
\leq \lim_{k\to\infty}E_k(\phi_j)
=E_\theta(P_\theta(\phi_j)),
\qquad \forall j\in\N.
\]
On the other hand,
\[
u\leq P_\theta(\phi_j)\leq \phi_j,
\]
hence \(P_\theta(\phi_j)\downarrow u\). By Theorem~\ref{Properties on Energy and E1}(ii),
\[
\limsup_{k\to\infty}E_k(u)\leq \lim_{j\to\infty}E_\theta(P_\theta(\phi_j))=E_\theta(u).
\]
\end{proof}

\begin{rmk}
The proof actually shows that Theorem~\ref{upper bound estimate} holds for any big and nef line bundle \(L\).
\end{rmk}

For the lower bound, we now assume that \(u\in \PSH(X,\theta)\cap L^\infty(X)\); see Remark~\ref{pitfall for finite energy potentials} for the obstruction in the finite-energy case.

\begin{proof}[Proof of Theorem~\ref{quantization of MA energy}]
It remains to prove that for any \(u\in \PSH(X,\theta)\cap L^\infty(X)\),
\begin{equation}\label{lower bound on energy}
\liminf_{k\to\infty}E_k(u)\geq E_\theta(u).
\end{equation}
Fix such a \(u\). Replacing \(u\) by \(u-\sup_X u\), we may assume that \(u\le 0\). Let \(t\mapsto u_t\) be the weak geodesic with \(u_0=0\) and \(u_1=u\). By Lemma~\ref{minimal singular/finite energy geodesic}(iii), we have \(\dot{u}_0\in L^\infty(X)\). By Theorem~\ref{convexity of quantized Monge--Ampere energy along geodesic}, \(E_k(u_t)\) is convex in \(t\), while Theorem~\ref{geodesic and MA energy} shows that \(E_\theta(u_t)\) is affine in \(t\). Hence, by convexity,
\[
E_k(u)-E_\theta(u)
\geq \lim_{t\to 0^+}\frac{E_k(u_t)-E_\theta(u_t)}{t}
\geq \int_X \dot{u}_0\left(\frac{1}{N_k}B_{0}^k-\frac{1}{\vol(L)}\theta^n\right),
\]
where we used \eqref{MA energy variation estimate} and Lemma~\ref{QMA variation} in the last inequality. Note that \(\dot{u}_0\) is integrable with respect to \(N_k^{-1}B_{0}^k\), since it is bounded.

Now fix \(\epsilon>0\). For each \(k\in\N\), set \(p_k:=\lfloor \epsilon k\rfloor\). Then the twisted bundle \(L^k\otimes K_X\otimes A^{p_k}\) is defined for every \(k\), and the associated Bergman measure \(B_0^{k,p_k}\) is defined as in Section~\ref{peak section and Bergman Kernel Comparison}. Since \(p_k\to\infty\) as \(k\to\infty\), Theorem~\ref{Bergman kernel comparison}, applied with \(p=p_k\), together with \eqref{p-twisted-local-holomorphic-morse-eps} yields a sequence \(D_{p_k}\downarrow 1\) such that
\begin{equation}\label{key estimate}
\frac{1}{N_k}B_0^k\le \frac{D_{p_k}^2}{N_k}\,B^{k,p_k}_0.
\end{equation}
Since \(u\le 0\), we have \(\dot u_0\le 0\). It follows from \eqref{key estimate} that
\begin{equation}\label{key estimate epsk}
E_k(u)-E_\theta(u)\geq \int_X\dot{u}_0\left(\frac{D_{p_k}^2}{N_k}B_0^{k,p_k}-\frac{1}{\vol(L)}\theta^n\right).
\end{equation}
By \eqref{p-twisted-local-holomorphic-morse-eps} and \(D_{p_k}\downarrow 1\), we obtain
\[
\limsup_{k\to\infty}\frac{D_{p_k}^2}{N_k}B_0^{k,p_k}
\leq \frac{(\theta+\epsilon\omega)^n}{\vol(L)}
\]
as measures. Since \(\dot u_0\in L^\infty(X)\), passing to the \(\liminf\) in \eqref{key estimate epsk} yields
\[
\liminf_{k\to\infty}\bigl(E_k(u)-E_\theta(u)\bigr)
\geq \frac{1}{\vol(L)}\int_X \dot u_0\bigl((\theta+\epsilon\omega)^n-\theta^n\bigr).
\]
Letting \(\epsilon\to 0^+\) gives
\[
\liminf_{k\to\infty}E_k(u)\ge E_\theta(u),
\]
which proves \eqref{lower bound on energy}.
\end{proof}

\begin{rmk}\label{pitfall for finite energy potentials}
The argument proving \eqref{lower bound on energy} fails for general \(u\in \E^1(X,\theta)\), since one cannot in general guarantee that the initial tangent \(\dot u_0\) of the weak geodesic \(t\mapsto u_t\) is integrable with respect to \((\theta+\epsilon\omega)^n\) for arbitrarily small \(\epsilon>0\). One obstruction comes from a counterexample of Di Nezza \cite[Example~4.5]{DN15}, which shows that for any \(\epsilon>0\),
\[
\E^1(X,\theta)\not\subset \E^1(X,\theta+\epsilon\omega).
\]
Indeed, let \(u\in \E^1(X,\theta)\setminus \E^1(X,\theta+\epsilon\omega)\) with \(u\le0\). Since \(\PSH(X,\theta)\subset \PSH(X,\theta+\epsilon\omega)\), the envelope construction shows that the weak geodesic \(v_t\) joining \(0\) and \(u\) in \(\PSH(X,\theta+\epsilon\omega)\) satisfies
\[
v_t \le u_t,\qquad t\in[0,1].
\]
Since \(u\notin \E^1(X,\theta+\epsilon\omega)\), one can show that
\[
\int_X \dot v_0\,(\theta+\epsilon\omega)^n = -\infty.
\]
As \(u_t\) and \(v_t\) agree at the endpoints, we have \(\dot u_0\le \dot v_0\le 0\). Therefore \(\dot{u}_0\notin L^1\bigl(X,(\theta+\epsilon\omega)^n\bigr)\), since
\[
\int_X \dot u_0\,(\theta+\epsilon\omega)^n
\le \int_X \dot v_0\,(\theta+\epsilon\omega)^n
= -\infty.
\]
Thus a necessary condition for the integrability of \(\dot u_0\) is that \(u\in \E^1(X,\theta+\epsilon\omega)\). To the author's knowledge, there is currently no general sufficient condition ensuring this.
\end{rmk}
\printbibliography
\noindent {\sc University of Maryland, College Park, USA}\\
{\tt yhou1994@umd.edu}\vspace{0.1in}\\

\end{document}